\newtheorem{theorem}{Theorem}[section]
\newtheorem{corollary}[theorem]{Corollary}
\newtheorem{lemma}[theorem]{Lemma}
\newtheorem{proposition}[theorem]{Proposition}
\newtheorem{definition}[theorem]{Definition}
\newtheorem{assumption}[theorem]{Assumption}
\newtheorem{remark}[theorem]{Remark}
\numberwithin{equation}{section}
\def\square{{\vcenter{\vbox{\hrule height.3pt
        \hbox{\vrule width.3pt height5pt \kern5pt
           \vrule width.3pt}
        \hrule height.3pt}}}}
  \def\sF {{\cal F}}
 \def\bE {{\mathbb E}}
\def\bP {{\mathbb P}}  \def\bR {{\mathbb R}}
\def\P{{\mathbb P}}
\def\bee{\begin{equation}}
\def\bet{\begin{theorem}}
\def\bep{\begin{proposition}}
\def\bel{\begin{lemma}}
\def\bec{\begin{corollary}}
\def\bed{\begin{definition}}
\def\ber{\begin{remark}}
\def\eee{\end{equation}}
\def\eet{\end{theorem}}
\def\eep{\end{proposition}}
\def\eel{\end{lemma}}
\def\eec{\end{corollary}}
\def\eed{\end{definition}}
\def\eer{\end{remark}}
\def\R{{\mathbb R}}
\def\P{{\mathbb P}}
\def\ni{\noindent }
\def\ms{\medskip}
\def\square{{\vcenter{\vbox{\hrule height.3pt
        \hbox{\vrule width.3pt height5pt \kern5pt
           \vrule width.3pt}
        \hrule height.3pt}}}}
\def\tlint{{- \kern-0.85em \int \kern-0.2em}}  
\def\dlint{{- \kern-1.05em \int \kern-0.4em}}  
  \def\sF {{\cal F}}
 \def\bE {{\mathbb E}}
\def\bP {{\mathbb P}}  \def\bR {{\mathbb R}}
\def\nn{{\nonumber}}
\begin{document}
\title{Pathwise uniqueness of one-dimensional SDEs driven by one-sided stable processes }
\author {Hua Ren}
\date{\today}

\maketitle

\begin{abstract}
For $\alpha\in (0,1)$, we consider stochastic differential equations driven by one-sided stable processes of order $\alpha$:  \[dX_t= \phi(X_{t-})\ dZ_t.\] We prove that pathwise uniqueness holds for this equation under the assumptions that $\phi$ is continuous, non-decreasing and positive on $\R$. A counterexample is given to show that the positivity of $\phi$ is crucial for pathwise uniqueness to hold.

\vskip.2cm
\noindent \emph{Subject Classification: Primary 60H10; Secondary 60J75}
\end{abstract}

\section {Introduction}

The theory of stochastic differential equations (SDEs) provides a very important tool for constructing stochastic processes. A one-dimensional SDE driven by Brownian motion without a drift term is of the form
\begin{align}
dX_t= \sigma(X_t)\ dB_t,
\label{CE00}
\end{align}
where $B$ is a Brownian motion.

In the recent years, there has been intense interest in studying SDEs with jumps. One can get a jump type analogue of \eqref{CE00} by replacing the Brownian motion $B_t$ by a L\'evy process with jumps, for example, an $\alpha$-stable process $Z_t$:
\begin{align}
dX_t= \sigma(X_{t-})\ dZ_t.
\label{CE01}
\end{align}

For equation \eqref{CE00}, it is well-known that if the coefficients are assumed to be Lipschitz continuous, then the existence of strong solutions and pathwise uniqueness can be easily obtained by Picard interation and Gronwall's inequality. This condition has been improved by Yamada and Watanabe \cite{YW}, who showed that pathwise uniqueness holds when $\sigma$ is H\"older continuous of order $\frac12$.  For weak uniqueness, a result by Engelbert and Schmidt \cite{ES} says that if $\sigma^2(x)> 0$ on $\R$, then weak uniqueness holds for \eqref{CE00}. For jump SDEs, when $Z_t$ is a symmetric stable process of order $\alpha\in (1,2)$, Bass \cite{Bass1} proved pathwise uniqueness for equation \eqref{CE01} if $\int_{0+}\rho(x)^{-\alpha}\ dx= \infty$, where $\rho$ represents the modulus of continuity of $\sigma$. In particular, if $\sigma$ is H\"older continuous of order $\frac{1}{\alpha}$, then pathwise uniqueness holds for \eqref{CE01}. See also Komatsu \cite{Kom}. Bass, Burdzy and Chen \cite{BBC} proved the above condition is sharp, based on ideas from Barlow \cite{Bar}. Based on the result of \cite{BBC}, when $Z_t$ is a symmetric stable process of order $\alpha\in (0,1)$, one has to require $\sigma$ to be almost Lipschitz to have pathwise uniqueness for \eqref{CE01}.\\

Fournier \cite{Fou} weakened the conditions when the driving process is a stable process of order $\alpha\in (0,1)$ by studying the following equation:
\begin{align}
dX_t= \int_{\bR\setminus{\{0\}}}\int_{\bR\setminus{\{0\}}}z[1_{\{0<u<\gamma(X_{s-})\}}-1_{\{\gamma(X_{s-})< u< 0\}}]\ M(ds\ dz\ du),
\label{CE02}
\end{align}
where $\gamma(x)= sign(\sigma(x))\cdot|\sigma(x)|^{\alpha}$ and $M$ is a certain Poisson random measure. Refer to \cite{Fou} for more details. One of the results in \cite{Fou} says that if $\sigma$ is bounded below from zero, and $\sigma$ is H\"older continuous of order $\alpha$ if $\alpha\in (0,\frac12]$ and non-increasing and H\"older continuous of order $1-\alpha$ if $\alpha\in (\frac12, 1)$, then pathwise uniqueness holds for \eqref{CE02} when $Z_t$ is a one-sided stable process of order $\alpha$.\\

In this paper, we study the equation
\begin{align}
dX_t= \phi(X_{t-})\ dZ_t,
\label{EQ0}
\end{align}
where $Z_t$ is a one-sided stable process of order $\alpha\in(0,1)$. The characteristic function of $Z_t$ is given by
 \[\bE e^{iuZ_t}= exp \Big\{t\int_0^{\infty}(f(x+h)-f(x))\frac{c}{|h|^{d+\alpha}}\ dh\Big\},\]
where $c$ is a positive constant. In particular, $Z_t$ only has positive jumps. Our aim is to examine the conditions on $\phi$ under which pathwise uniqueness holds for \eqref{EQ0}. \\

The assumptions we put on $\phi$ are the following:
\begin{assumption}
\begin{enumerate}\label{ASS1}
 \item $\phi(\cdot)$ is continuous on $\mathbb{R}$;
 \item $\phi(\cdot)$ is non-decreasing on $\mathbb{R}$;
 \item $\phi(\cdot)$ is positive on $\mathbb{R}$.
\end{enumerate}
\end{assumption}

We prove that under Assumption \ref{ASS1}, pathwise uniqueness holds for equation \eqref{EQ0}.

\begin{theorem}
Suppose $\phi$ satisfies Assumption \ref{ASS1}. Then the solution to equation \eqref{EQ0} is pathwise unique.\\
\label{THM1}
\end{theorem}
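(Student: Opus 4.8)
The plan is to exploit three structural features of \eqref{EQ0}: that for $\alpha\in(0,1)$ the driver $Z$ is an $\alpha$-stable subordinator (an increasing, pure-jump process of finite variation with L\'evy measure $c\,h^{-1-\alpha}\,dh$ on $(0,\infty)$); that the monotonicity and one-sidedness force any two solutions to stay ordered; and that the positivity of $\phi$ makes the associated martingale problem well posed. Throughout, let $X,Y$ be two solutions of \eqref{EQ0} driven by the same $Z$ with $X_0=Y_0$, and set $U_t=X_t-Y_t$.

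First I would record the ordering. Since $\phi>0$ and $Z$ is increasing, each solution is itself increasing, and since $Z$ is pure-jump of finite variation the process $U$ has no continuous part, so $U_t=\sum_{s\le t}\Delta U_s$ with $\Delta U_s=(\phi(X_{s-})-\phi(Y_{s-}))\Delta Z_s$. Because $\Delta Z_s\ge 0$ (one-sidedness) and $\phi$ is non-decreasing, $\Delta U_s$ has the same sign as $U_{s-}$; hence $|U|$ is non-decreasing and, once $U$ leaves $0$, it never returns to or crosses $0$. Consequently, for a.e.\ $\omega$ either $X_t\ge Y_t$ for all $t$ or $X_t\le Y_t$ for all $t$. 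I would then observe that $M_t:=\max(X_t,Y_t)$ and $m_t:=\min(X_t,Y_t)$ are again solutions of \eqref{EQ0}: they are adapted, and on the path-measurable event where $X\ge Y$ globally they coincide with $X,Y$ respectively, and vice versa, so each satisfies $dM_t=\phi(M_{t-})\,dZ_t$ and $dm_t=\phi(m_{t-})\,dZ_t$ pathwise, with $M\ge m$ and $M_0=m_0$.

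The decisive ingredient is weak uniqueness, and this is where positivity is used. The generator associated with \eqref{EQ0} is $\sL f(x)=\int_0^\infty[f(x+\phi(x)h)-f(x)]\,c\,h^{-1-\alpha}\,dh=\phi(x)^\alpha\,\sL_0 f(x)$, where $\sL_0$ generates the standard $\alpha$-stable subordinator $\wt Z$. Thus a solution is a time change $X_t=\wt Z_{\gamma_t}$, where $\gamma$ is the inverse of the additive functional $t\mapsto\int_0^t\phi(\wt Z_s)^{-\alpha}\,ds$. Because $\phi$ is continuous and strictly positive, $\phi^{-\alpha}$ is locally bounded, so this clock is finite, continuous and strictly increasing; its inverse is therefore a well-defined continuous strictly increasing time change, and the law of $X$ is a deterministic functional of the law of $\wt Z$. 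Hence the martingale problem for $\sL$ is well posed and all solutions of \eqref{EQ0} from a common starting point have the same law. (If $\phi$ vanished somewhere the clock could explode when $\wt Z$ meets the zero set, stalling the time change and destroying uniqueness; this is exactly the mechanism behind the counterexample.)

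Finally I would combine the two facts. Since $M$ and $m$ are both solutions started at $x_0$, weak uniqueness gives that $M_t$ and $m_t$ have the same law for each $t$; since also $M_t\ge m_t$ pointwise, applying a bounded strictly increasing function $g$ (needed because $Z$, and hence $X$, has infinite first moment for $\alpha<1$, so one cannot simply equate expectations of $M_t$ and $m_t$) yields $\E g(M_t)=\E g(m_t)$ together with $g(M_t)\ge g(m_t)$, forcing $M_t=m_t$ a.s. Right-continuity then gives $M\equiv m$, i.e.\ $X\equiv Y$. The main obstacle is the weak-uniqueness step: one must rigorously justify the time change for a driver with a dense set of small jumps, verifying that the additive functional and its inverse are well behaved under only continuity and positivity of $\phi$, and carefully establish the path regularity (absence of a continuous part and the no-sign-change property) underlying the global ordering. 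The positivity of $\phi$ is precisely what prevents the clock from degenerating and is therefore indispensable.
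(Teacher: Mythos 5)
Your proposal is correct in outline, and its middle step (weak uniqueness via the time change $X_t=\wt Z_{\gamma_t}$ with clock $\int_0^t\phi(\wt Z_s)^{-\alpha}\,ds$) is essentially the paper's Proposition 3.1, which the paper makes rigorous by invoking Kallenberg's representation theorem for stable integrals and Engelbert--Schmidt's lemma rather than a generator computation; note that a complete argument must also treat the case $\tau_\infty<\infty$ separately, as the paper does. Where you genuinely diverge is in how weak uniqueness is upgraded to pathwise uniqueness. The paper first constructs a strong solution by monotone approximation (truncating the jumps of $Z$ below $1/n$ and using that the approximating solutions increase in $n$), obtaining a measurable map $H:Z\mapsto X$, and then argues that any other solution $X'$ satisfies $(Z,X')\overset{d}{=}(Z,X)$ and hence $X'=H(Z)=X$ a.s. You instead prove a comparison lemma --- since $U=X-Y$ is pure-jump and $\Delta U_s$ has the sign of $U_{s-}$ (by monotonicity of $\phi$ and positivity of the jumps of $Z$), each path is globally ordered --- so that $M=\max(X,Y)$ and $m=\min(X,Y)$ are again solutions, and then weak uniqueness plus $M_t\ge m_t$ and a bounded strictly increasing test function forces $M_t=m_t$ a.s. Your route has the advantage of not needing the strong-existence construction at all for uniqueness, and it sidesteps the (slightly delicate) transfer-of-joint-laws step in the paper's final argument; the paper's route has the advantage that the strong solution is needed anyway and the ordering observation it uses is the same monotone comparison you prove, just applied to the approximating sequence rather than to two exact solutions. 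Both hinge identically on positivity of $\phi$ through the non-degeneracy of the time-change clock.
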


To prove Theorem \ref{THM1}, our strategy is to first construct a strong solution $X_t$ to equation \eqref{EQ0} and then show that weak uniqueness holds for \eqref{EQ0}. Once we finish these two steps, pathwise uniqueness will follow.\\

For more background
information on jump SDEs, see the books \cite{App}, \cite{Sat}, \cite{Situ}, \cite{Sko} and a survey paper \cite{Bass2} by Bass .
See \cite{Bass1}, \cite{BBC}, \cite{BBO}, \cite{ES}, \cite{Fou},
\cite{KS},
\cite{LP}, \cite{PZ}, \cite{RW},
\cite{Zan1} and \cite{Zan2}
for more results on the existence and uniqueness for the solutions of closely related SDEs with jumps. \\


\section{Existence of the strong solution}

Let $(\Omega, \sF, \{\sF_t\}_{t\ge0},\P)$ be a filtered probability space which satisfies the usual conditions. For $\alpha\in(0,1)$, let $Z_t$ be a $\sF_t$-adapted one-sided $\alpha$-stable process. The goal of this section is to construct a strong solution to equation $\eqref{EQ0}$.\\

\begin{proposition}\label{PRO1'}
Suppose $\phi$ satisfies Assumption \ref{ASS1}. Then there exists a strong solution to equation \eqref{EQ0}.
\end{proposition}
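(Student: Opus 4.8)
The plan is to construct the strong solution by a monotone approximation scheme exploiting the three hypotheses on $\phi$. Since $\phi$ is continuous, non-decreasing and positive, I would first approximate $\phi$ from below by a sequence of coefficients $\phi_n$ that are still non-decreasing and positive but additionally Lipschitz (say $\phi_n = \phi * \rho_{1/n}$ mollified and then truncated, or piecewise-linear interpolations). For each Lipschitz coefficient $\phi_n$, classical theory for SDEs driven by a fixed Lévy process (Picard iteration plus a Gronwall-type estimate on the jump terms) yields a unique strong solution $X^n_t$ adapted to $\{\sF_t\}$ and driven by the \emph{same} realization of $Z_t$. The key point is that all the $X^n$ are built on the common probability space from the common driving noise, so any limit we extract is automatically a strong (that is, $\sF_t$-adapted) solution.

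\textbf{The comparison step.} Because $Z_t$ has only positive jumps and $\phi_n \le \phi_{n+1}$ with both monotone in the state variable, I expect a pathwise comparison principle: the flows driven by the ordered, monotone coefficients should themselves be ordered, $X^n_t \le X^{n+1}_t$ for all $t$, almost surely. One-sidedness of the driving process is exactly what makes such a comparison plausible, since the increments $dZ_t \ge 0$ interact with the monotonicity of $\phi$ to preserve the order of solutions rather than destroy it. Granting this, $X^n_t$ is monotone increasing in $n$ and one can pass to the pointwise limit $X_t = \lim_n X^n_t$. I would then identify $X_t$ as a solution to \eqref{EQ0}: since $\phi$ is continuous, $\phi_n(X^n_{s-}) \to \phi(X_{s-})$, and one passes to the limit in the stochastic integral $\int_0^t \phi_n(X^n_{s-})\,dZ_s$ using a dominated-convergence or stochastic-continuity argument for integrals against the Poisson random measure driving $Z$.

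\textbf{The main obstacle} will be controlling the limit so that $X_t$ is finite and genuinely solves the equation — i.e., ruling out explosion of the increasing sequence $X^n_t$ and justifying the interchange of limit and stochastic integral. Positivity of $\phi$ gives a lower bound that prevents the solution from sticking, but the upper control needed for tightness and non-explosion must be extracted from integrability of the one-sided stable driver (which, for $\alpha \in (0,1)$, has no first moment, so moment estimates require care and should be done via truncation of large jumps and a localization/stopping-time argument). I would localize by stopping times $\tau_K = \inf\{t : X_t \ge K\}$, establish the convergence and the equation on each $[0,\tau_K)$, and then argue $\tau_K \to \infty$. Concretely, the order of operations is: (i) build the Lipschitz approximants $\phi_n$; (ii) invoke classical strong existence for each $\phi_n$; (iii) prove the pathwise comparison $X^n \le X^{n+1}$ using one-sidedness and monotonicity; (iv) define $X = \lim_n X^n$ and control it via localization; (v) pass to the limit in the stochastic integral using continuity of $\phi$ to conclude that $X$ solves \eqref{EQ0} and is $\sF_t$-adapted, hence a strong solution.
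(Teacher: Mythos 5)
Your construction is correct in outline but takes a genuinely different route from the paper: you regularize the coefficient, whereas the paper truncates the driving noise. The paper sets $Z^n_t=\sum_{s\le t}\Delta Z_s 1_{\{\Delta Z_s\ge 1/n\}}$, so each approximating equation has only finitely many jumps per unit time and is solved explicitly jump by jump; monotonicity of $\phi$ and positivity of the jumps then give $X^n_t\le X^{n+1}_t$ by an elementary first-disagreement-time argument (after any jump of $Z^n$ both solutions are constant for a positive length of time, which yields the contradiction), and the limit equation is identified by monotone convergence of the sum $\sum_{s\le t}\phi(X^n_{s-})\Delta Z_s$. Your scheme buys the comfort of classical Lipschitz theory for each approximant, but at the price of turning the comparison step --- which you only ``expect'' --- into the real content of the proof: with an infinite-activity driver you cannot argue that the solutions are locally constant after the first time the order is violated, so the comparison must actually be proved, e.g.\ by taking $\phi_n(x)=\inf_y\{\phi(y)+n|x-y|\}$ (non-decreasing, positive, Lipschitz, increasing pointwise to $\phi$ by continuity) and running a Stieltjes--Gronwall estimate on the negative part $(X^{n+1}_t-X^n_t)^-$, using $\phi_{n+1}(X^{n+1}_{s-})-\phi_n(X^n_{s-})\ge -L\,(X^{n+1}_{s-}-X^n_{s-})^-$, which is where the Lipschitz property of the approximants is genuinely needed. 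Your worries about moments of $Z$ and dominated convergence, on the other hand, are unnecessary: for $\alpha\in(0,1)$ the integral $\int_0^t\phi(X_{s-})\,dZ_s$ is a pathwise Lebesgue--Stieltjes sum over the jumps of an increasing process, everything in sight is non-negative and monotone in $n$, and monotone convergence passes to the limit with no integrability input; the possible divergence of the monotone limit (explosion) is an issue your argument shares with the paper's and is not cured by moment bounds on $Z$ in either approach.
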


\begin{proof} For each $n\in \mathbb{N}$, we define
\[Z_t^n= \sum_{s\le t}\Delta Z_s1_{\{\Delta Z_s\ge \frac{1}{n}\}}.\] Then $Z_t^n$ is adapted to $\sF_t$. Recall that $Z_t$ only has positive jumps and no continuous part, so for each $t\ge 0, \{Z_t^n\}_{n\ge 1}$ is a non-decreasing process and $Z_t^n\rightarrow Z_t$ $\mathbb{P}$-a.s. as $n\rightarrow\infty$.

For any $x_0\in \mathbb{R}$, let $X^n$ be the solution to
\begin{align*}
d X_t^n= \phi(X_{t-}^n)\ d Z_t^n, \ \ X_0^n= x_0.
\end{align*}
Recall that there are only finitely many jumps of $Z_t^n$ on a finite time interval, so it is easy to see the existence of the solution $X^n$ and that the solution is uniquely determined by the initial condition: $X_t^n$ will stay constant until the first jump of $Z_t^n$, at which point $X_t^n$ will jump $\phi(X_{t-}^n)\Delta Z_t$.\\

It is clear that $X_t^n$ is adapted to $\sF_t$ for each $n\in \mathbb{N}$. For each $t\ge 0$, we show that $\{X_t^n\}_{n\ge 1}$ is also a non-decreasing sequence.\\

Take $n, m\in \mathbb{N}$ with $n> m$. We claim that $X_t^{n}\ge X_t^m$ $\mathbb{P}$-a.s. for $t\ge 0$. If not, let \[S=\inf\{t\ge 0: X_t^n< X_t^m\}.\]
Then $X_{S-}^n\ge X_{S-}^m$. Since $X_S^n= X_{S-}^n+ \phi(X_{S-}^n)\Delta Z_S$ and $X_S^m= X_{S-}^m+ \phi(X_{S-}^m)\Delta Z_S$, remembering that $\phi$ is non-decreasing, then $\phi(X_{S-}^n)\ge \phi(X_{S-}^m)$. Therefore $X_S^n\ge X_S^m$. Clearly $S$ must be a jump time of $Z^n$ if $S< \infty$. But on the set $S< \infty$, $Z^n$ and $Z^m$, and therefore also $X^n$ and $X^m$, will be constant for a positive length of time after time $S$. But this contradicts the definition of $S$. We conclude that $S= \infty$ $\bP$-a.s.\\

Let $X_t= \lim_{n\rightarrow\infty} X_t^n$ and note that $X_t$ is adapted to $\sF_t$. We have
\[X_t^n= x_0+ \sum_{s\le t}\phi(X_{s-}^n)1_{\{\Delta Z_s\ge \frac{1}{n}\}}\Delta Z_s.\]
As $n\rightarrow\infty$, the right hand side converges to
\[x_0+ \sum_{s\le t}\phi(X_{s-})\Delta Z_s\] by monotone convergence. Since $Z$ is non-decreasing and has no continuous part, we conclude that
\[X_t= x_0+ \int_0^t\phi(X_{s-})\ dZ_s.\]

\end{proof}


\section{Weak uniqueness}
\begin{proposition}
Suppose $\phi$ satisfies Assumption \ref{ASS1}. Then the solution to equation \eqref{EQ0} is unique in law.\\
\label{PRO2}
\end{proposition}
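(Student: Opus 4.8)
The plan is to reduce weak uniqueness to a time-change argument combined with a martingale-problem/uniqueness statement for the driving process. Since $\phi$ is positive and $Z$ is a pure-jump increasing process, the natural idea is to run the clock according to how fast $\phi$ accumulates mass, thereby removing the state-dependent coefficient. Concretely, suppose $X$ is any weak solution of \eqref{EQ0} started at $x_0$. Define the additive functional
\begin{equation}
A_t= \int_0^t \phi(X_{s-})^{\alpha}\ ds,
\end{equation}
which is strictly increasing and continuous because $\phi>0$ (Assumption \ref{ASS1}(3)) and continuous; let $\tau_t= \inf\{s: A_s> t\}$ be its right-continuous inverse. The first step is to show that the time-changed process $Y_t= X_{\tau_t}$ is itself a one-sided $\alpha$-stable process (started at $x_0$), by computing the generator of $X$ and checking that under this clock the $\phi^{\alpha}$ factor coming from the scaling property of the one-sided stable jump kernel is exactly cancelled. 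Because $Z$ has jump measure $c\,|h|^{-(1+\alpha)}\,dh$ on $(0,\infty)$, a jump of $X$ of size $\phi(x)\Delta Z$ contributes jumps distributed as $\phi(x)\cdot(\text{stable})$, and the $\alpha$-homogeneity of the stable Lévy measure converts the multiplicative factor $\phi(x)$ into the time-scaling factor $\phi(x)^{\alpha}$; this is the mechanism that makes the time change trivialize the equation.

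Granting this, the law of $Y$ is uniquely determined (it is a one-sided $\alpha$-stable process, whose law is fixed by $\alpha$ and the normalizing constant $c$). The second step is to invert the time change: one recovers $X_t= Y_{A_t}$, where $A$ is the inverse of the clock $\tau$, and $A$ can in turn be written as a functional of $Y$ alone, namely $\tau_t= \int_0^t \phi(Y_{s})^{-\alpha}\,ds$ should hold after substitution, so that $A_t$ is determined pathwise by $Y$. Since the law of $Y$ is unique and $X$ is a deterministic (pathwise) functional of $Y$ through this inverse time change, the law of $X$ is uniquely determined. This yields uniqueness in law for \eqref{EQ0}.

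The main obstacle I expect is justifying the time-change step rigorously in the jump setting. For continuous martingales, random time change is classical, but here $Z$ is a pure-jump process and $X$ jumps, so I must verify that $Y= X_{\tau_{\cdot}}$ has the correct jump structure and no spurious drift, which means working with the associated Poisson random measure / martingale problem rather than with a quadratic-variation clock. One must check that the time change is well-defined (that $A_t<\infty$ and $A_t\to\infty$, using that $\phi$ stays positive so $\phi^{\alpha}$ is bounded below on the range of $X$ over finite horizons, and that $Z$ itself is finite), that the stopping-time/optional-sampling machinery applies at the jump times, and that the $\alpha$-homogeneity computation survives the presence of the indicator structure in the Lévy measure. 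A secondary subtlety is confirming that the clock $A$ is genuinely recoverable from $Y$; once $Y$ is known to be stable and the relation between $A$ and $\tau$ is inverted, $X$ becomes a measurable functional of $Y$, but making this map explicit (and single-valued) is where the continuity and positivity of $\phi$ must be used carefully. Monotonicity of $\phi$ (Assumption \ref{ASS1}(2)) is likely not essential for weak uniqueness and is used instead for the strong solution in Proposition \ref{PRO1'}; only continuity and positivity should be needed here.
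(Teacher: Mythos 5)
Your plan is essentially the paper's proof: the paper uses exactly the clock $\tau_t=\int_0^t\phi(X_s)^{\alpha}\,ds$, obtains the representation $X_t=x+\tilde Z_{\tau_t}$ with $\tilde Z$ a one-sided $\alpha$-stable process by citing Kallenberg's time-change theorem (rather than redoing the generator/homogeneity computation you outline), and then recovers the clock from $\tilde Z$ via $B_t=\int_0^{t\wedge\tau_\infty}\phi(x+\tilde Z_s)^{-\alpha}\,ds$, which is precisely your second step. The only point to be careful about, which the paper treats as a separate case, is that $\tau_\infty$ may be finite, so the identity for the inverse clock must be checked separately for $t\ge\tau_\infty$.
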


\begin{proof} Let $X_t$ denote a weak solution to equation \eqref{EQ0}. Then by Theorem 4.1 in \cite{Kal}, for any real number $x$, there exists a process $\tilde{Z_t}$ such that $\tilde{Z}$ has the same law as $Z$ and $X_t= x+ \tilde{Z}_{\tau_t}$ where $\tau_t= \int_0^t\phi(X_s)^{\alpha}\ ds$.\\

Let $B_t$ be the right inverse of $\tau_t$, i.e.,
\begin{align}
B_t= \inf\{s\ge 0: \tau_s> t\}.
\label{CE03}
\end{align}

We will show that
\begin{align}
B_t= \int_0^t\phi(x+ \tilde{Z_s})^{-\alpha}\ ds
\label{CEE01}
\end{align}
and then conclude that $\tau_t= \inf\{s\ge 0: \int_0^s\phi(x+ \tilde{Z_u})^{-\alpha} du> t\}$ $\mathbb{P}$-a.s. Therefore the distribution of $X_t$ will be determined by the one of $\tilde{Z}$ for any given initial value $x$.\\

Recalling that $\phi$ is positive on $\bR$, for any $t\ge 0$, by Lemma 1.6 in \cite{ES}, we have
\begin{align}
B_t&= \int_0^{B_t}1_{\{\phi(X_s)\neq0\}}\ ds= \int_0^{t\wedge \tau_{\infty}}\phi(X_{B_s})^{-\alpha}\ ds\nn\\
&=\int_0^{t\wedge\tau_{\infty}}\phi (x+\tilde{Z_s})^{-\alpha}\ ds.
\label{CE06}
\end{align}

Case I: For $t\ge \tau_{\infty}$. Since $\phi$ is positive on $\bR$, then $\tau_t$ is strictly increasing on $[0, \infty]$. Then, by \eqref{CE03},
\begin{align*}
B_t&= \inf\{s\ge 0: \tau_s> t\}\ge \inf\{s\ge 0: \tau_s> \tau_{\infty}\}\\
&= \infty.
\end{align*}

Therefore, by \eqref{CE06}, $\int_0^{t\wedge \tau_{\infty}}\phi(x+\tilde{Z}_s)^{-\alpha}\ ds= B_t= \infty$. Since \[\int_0^t\phi(x+\tilde{Z}_s)^{-\alpha}\ ds\ge \int_0^{t\wedge \tau_{\infty}} \phi(x+\tilde{Z}_s)^{-\alpha}\ ds,\] then \[\int_0^t\phi(x+\tilde{Z}_s)^{-\alpha}\ ds= \infty.\]

Hence, $B_t= \infty= \int_0^t\phi(x+\tilde{Z}_s)^{-\alpha}\ ds$, i.e., \eqref{CEE01} holds with both sides equal to $\infty$.\\

Case II: For $t< \tau_{\infty}$. Then, by \eqref{CE06}, \[B_t= \int_0^{t\wedge\tau_{\infty}} \phi(x+\tilde{Z_s})^{-\alpha}\ ds= \int_0^t \phi(x+\tilde{Z_s})^{-\alpha}\ ds,\]
which is just \eqref{CEE01}.

Hence \eqref{CEE01} has been proved. Therefore we can conclude that for any given initial value $x$, the law of $X_t$ is unique.\\
\end{proof}


\section{Pathwise uniqueness}

Now we are ready to give the proof of Theorem \ref{THM1}.

\begin{proof}[\textnormal{\textbf{Proof of Theorem \ref{THM1}}}]
According to Proposition \ref{PRO1'}, there exists a strong solution $X_t$ to equation \eqref{EQ0}. Therefore there exists a measurable map $H: Z\mapsto X.$ Suppose $X_t'$ is another solution. Then by Proposition \ref{PRO2}, the laws of $X$ and $X'$ are the same. Since $Z_t= \int_0^t\phi(X_{s-})^{-1}\ dX_s$ and $Z_t= \int_0^t\phi(X_{s-}')^{-1}\ dX_s'$, then the joint laws of $(Z,X)$ and $(Z,X')$ are the same. Since $X= H(Z)$, then $X'= H(Z)$. Therefore, $X'= H(Z)= X$.\\
\end{proof}

\begin{remark}
We give a counterexample to show that the positivity condition on $\phi$ is crucial to obtaining pathwise uniqueness.\\

For $\alpha\in (0,1)$ and $\beta\in (0,1)$, let $Z_t$ be a one-sided $\alpha$-stable process which starts from zero. Let $\phi(x)=x^{\beta}$ for $x\ge 0$. Define $B_t= \int_0^t \phi(Z_{s-})^{-\alpha}\ ds$. Then from known estimates on the transition density function of stable processes, $B_t$ will be finite $\P$-a.s. On the other hand, it is clear that $B_1> 0$, $\mathbb{P}$-a.s., and by the scaling property of $Z_t$, we can see that $B_t$ has the same law as $t^{1-\beta} B_1$.
Therefore for any $M>0$, \[\mathbb{P}(B_t\le M)= \mathbb{P}(B_1\le t^{\beta-1}M)\rightarrow 0 \ \textnormal{as}\  t\rightarrow \infty,\]
recalling that $\beta\in(0,1)$. We then conclude $B_t\rightarrow \infty$, $\mathbb{P}$-a.s., as $t\rightarrow\infty$. Let $\gamma_t$ be the inverse of $B_t$. Define \[Y_t= \int_0^t\phi(Z_{s-})\ dZ_s,\ \ V_t= Y_{\gamma_t}\ \ \textnormal{and}\ \ X_t= Z_{\gamma_t}.\] Then by Theorem 3 in \cite{KS},
$V_t$ has the same law as $Z_t$. Some stochastic calculus shows that
\begin{align}
dX_t= \phi(X_{t-})\ dV_t,\ \ X_0= 0.
\label{CE07}
\end{align}

Notice that $X_t$ is not identically zero, while the identically zero process is another solution to \eqref{CE07}. Therefore pathwise uniqueness does not hold.\\

This example also shows that it is not true that there is necessarily
uniqueness to the ordinary differential equation
\[dy(x)= \phi(y(x-))\, z(dx),\]
even when $z$ is a positive purely atomic measure and $\phi$ is non-negative
and non-decreasing.
\end{remark}


\ \\ \ \\

\ni {\bf Hua Ren}\\
Department of Mathematics\\
University of Connecticut \\
Storrs, CT 06269-3009, USA\\
{\tt hua.ren@uconn.edu}
\ms


\begin{thebibliography}{99}




\bibitem{App}
D. Applebaum, {\it L\`evy Processes and Stochastic Calculus.} Cambridge Univ. Press, Cambridge, 2004.

\bibitem{Bar}
M.T. Barlow, One-dimensional stochastic differential equations with no strong solution. {\it J. London Math. Soc.} {\bf 26} (1982), 335-347.


\bibitem{upjump} R.F. Bass,  Uniqueness in law for pure jump Markov processes.
{\it Probab. Theory Relat. Fields \bf 79} (1988), 271--287.



\bibitem{Bass1}
R.F. Bass, Stochastic differential equations driven by symmetric stable processes. {\it S\`eminaire de Probabilit\`es, XXXVI,} 302-313, Lecture Notes in Math., 1801, Springer, Berlin, 2003.

\bibitem{Bass2}
R.F. Bass, Stochastic differential equations with jumps. {\it Probab. Surv.} {\bf 1} (2004), 1-19.

\bibitem{BBC}
R.F. Bass, K. Burdzy and Z-Q. Chen, Stochastic differential equations driven by stable process for which pathwise uniqueness fails. {\it Stochastic Process. Appl.} {\bf 111} (2003), 1-15.



\bibitem{BBO}
M. Benabdallah, S. Bouhadou, and Y. Ouknine, On the pathwise uniqueness of solutions of one-dimensional stochastic differential equations with jumps. Preprint.




\bibitem{ES}
H.J. Engelbert and W. Schmidt, On solutions of one-dimentional stochastic differential equations without drift. {\it Z. Wahrsch. Verw. Gebiete} {\bf 68} (1985), 287-314.


\bibitem{Fou}
N. Fournier, On the pathwise uniqueness for stochastic differential equations driven by stable L\'evy processes. {\it Ann. Inst. H. Poincar\'e Probab. Statist. \bf 49} (2013), 138-159


\bibitem{FL}
Z.F. Fu and Z.H. Li, Stochastic equations of non-negative processes with jumps. {\it Stochastic Process. Appl.} {\bf 120} (2010), 306-330.

\bibitem{Kal}
O. Kallenberg, Some time change representations of stable integrals, via predictable transformations of local martingales. {\it Stochastic Process. Appl.} {\bf 40} (1992), 199-223.

\bibitem{KS}
J. Kallsen and A.V. Shiryaev, Time change representation of stochastic integrals. {\it Theory Probab. Appl.} {\bf 46} (2001), no.3, 522-528.

\bibitem{Kassmann}
M. Kassmann, {\it Analysis of symmetric Markov processes. A localization technique for non-local operators,}
Habilitation Thesis, Universit\"at Bonn, 2007.


\bibitem{Kom}
T. Komatsu, On the pathwise uniqueness of solutions of one-dimensional stochastic differential equations of jump type. {\it Proc. Japan Acad. \bf 58 Ser. A} (1982), 383-386.

\bibitem{LM}
Z.H. Li and L. Mytnik, Strong solutions for stochastic differential equations with jumps. {\it Ann. Inst. H. Poincar\'e Probab. Statist.} {\bf 47} (2011), 1055-1067.

\bibitem{LP}
Z.H. Li and F. Pu, Strong solutions of jump-type stochastic equations. Preprint.




\bibitem{PZ}
H. Pragaraukas and P.A. Zanzotto, On one-dimensional stochastic differential equations driven by stable processes. {\it Lithuanian Math. J.} {\bf 40} (2000), no. 3, 277-295.

\bibitem{RW}
J. Rosi\`nski and W.A. Wogczy\`nski, On It\^{o} stochastic integration with respect to p-stable motion: inner clock, integrability of sample paths, double and multiple integrals. {\it Ann. Probab.} {\bf 14} (1986), 271-286.

\bibitem{Sat}
K. Sato, {\it L\'evy Processes and Infinitely Divisible Distributions.} Cambridge Univ. Press, Cambridge, 1999.

\bibitem{Situ}
R. Situ, {\it Theory of Stochastic Differential Equations with Jumps and Applications.} Springer, Berlin, 2005.


\bibitem{Sko}
A.V. Skorokhod, {\it Studies in the Theory of Random Process.} Addison-Wesley, Reading, MA, 1965.




\bibitem{YW}
T. Yamada and S. Watanabe, On the uniqueness of solutions of stochastic differential equations. {\it J. Math. Kyoto Univ.} {\bf 11} (1971), 155-167.

\bibitem{Zan1}
P.A. Zanzotto, On solutions of one-dimensional stochastic differential equations driven by stable L\'evy motion. {\it Stochastic Process. Appl.} {\bf 68} (1997) 209-228.

\bibitem{Zan2}
P.A. Zanzotto, On stochastic differential equations driven by a Cauchy process and other stable L\'evy motions. {\it Ann. Probab.} {\bf 30} (2002), no.2, 802-825.


\end{thebibliography}
\end{document}